\numberwithin{equation}{section}
\newcommand{\be}{\begin{equation}}
\newcommand{\ee}{\end{equation}}
\newcommand{\R}{\mathbb R}
\newcommand{\Z}{\mathbb Z}
\newcommand{\ep}{\varepsilon}
\renewcommand{\phi}{\varphi}
\newcommand{\pd}{\partial}
\newcommand{\co}{\colon}
\DeclareMathOperator{\vol}{vol}
\newtheorem{theorem}{Theorem}
\newtheorem{lemma}{Lemma}[section]
\newtheorem{proposition}[lemma]{Proposition}
\theoremstyle{remark}
\newtheorem{remark}[lemma]{Remark}
\theoremstyle{definition}
\newtheorem{definition}[lemma]{Definition}
\begin{document}


\title{Minimality of planes in normed spaces}

\author{Dmitri Burago}                                                          
\address{Dmitri Burago: Pennsylvania State University,                          
Department of Mathematics, University Park, PA 16802, USA}                      
\email{burago@math.psu.edu}                                                     
                                                                                
\author{Sergei Ivanov}
\address{Sergei Ivanov:
St.Petersburg Department of Steklov Mathematical Institute,
Fontanka 27, St.Petersburg 191023, Russia}
\email{svivanov@pdmi.ras.ru}

\thanks{The first author was partially supported                                
by NSF grant DMS-0905838.
The second author was partially supported by
RFBR grant 11-01-00302-a.}

\subjclass[2010]{52A21, 52A38, 52A40, 53C60}

\keywords{Busemann--Hausdorff surface area, convexity, ellipticity}

\begin{abstract}
We prove that a region in a two-dimensional affine subspace of a
normed space $V$ has the least 2-dimensional Hausdorff
measure among all compact surfaces with the same boundary.
Furthermore, the 2-dimensional Hausdorff area density admits a convex extension
to $\Lambda^2 V$.
The proof is based on a (probably)
new inequality for the Euclidean area of a convex centrally-symmetric polygon.
\end{abstract}

\maketitle

\section{Introduction}

The main purpose of this paper is to prove the following seemingly elementary fact. 
Consider a bounded region in a two-dimensional affine plane in a normed vector space.
Then the region has the least possible two-dimensional Hausdorff measure among all
compact two-dimensional surfaces with the same boundary. Even though the problem 
sounds almost silly, it stood open for over 50 years and still remains open in dimensions
greater than 2. 

We prove our result by showing the convexity of the density function
for the Busemann--Hausdorff surface area in normed spaces.
This contrasts with a result of Busemann, Ewald and Shephard \cite{BES}
who demonstrated that the density of the Holmes--Thompson (symplectic) surface area
may fail to be convex.

We first embed the problem in a more general set-up, borrowing
some preliminaries  from \cite{BI04}.
Here we consider various notions of $k$-dimensional surface areas,
although the rest of the paper is devoted
to the case $k=2$ and the Busemann--Hausdorff definition of area
in a normed space.

A common way to introduce a (translation-invariant) $k$-dimensional
surface area in $\R^n$ is to define its density
in every $k$-dimensional linear subspace. Namely for a
continuous function $A\co G(n,k)\to\R_+$, where $G(n,k)=G_k(\R^n)$
is the Grassmannian manifold of $k$-dimensional linear subspaces of $\R^n$,
one defines the associated surface area functional $Area_A$ by 
the obvious formula
$$
Area_A(S)=\int_S A(T_xS)\,dm(x),
$$
where $S$ is a smooth (more generally, Lipschitz) surface,
$m$ is the $k$-dimensional Euclidean surface area, and the tangent spaces
$T_xS$ are regarded as points in $G(n,k)$.

If $\R^n$ is equipped with a norm $\|\cdot\|$, then every subspace
$P\in G(n,k)$ becomes a $k$-dimensional normed space. 
In the Busemann--Hausdorff definition of area, the density $A(P)$
is defined so that the area of the norm's unit ball (in $P$)
equals the standard constant $\ep_k$ depending only on $k$,
namely $\ep_k$ is the Euclidean volume of the Euclidean unit ball in $\R^k$.
The resulting area density $A^{bh}\co G(n,k)\to\R_+$ has the form
$$
 A^{bh}(P)= \frac{\ep_k}{m_k(P \cap B)}
$$
where $B\subset\R^n$ is the unit ball of the norm $\|\cdot\|$
and $m_k$ is the $k$-dimensional Euclidean area. The corresponding 
surface area functional has a clear geometric meaning: for embedded surfaces,
it is just the $k$-dimensional Hausdorff measure of the surface as a subset
of the normed space.

\begin{remark}
Another commonly used notion is the Holmes--Thompson area
whose density $A^{ht}$ is given by
$$
A^{ht}(P)= \frac1{\ep_k} m_k((P \cap B)^*),
$$
where $(P \cap B)^*$ is the polar body to $P\cap B$
with respect to the Euclidean structure in $P$.
Note that $(P\cap B)^*$ is the orthogonal projection
of $B^*$ to $P$ where $B^*$ is polar to $B$ in $\R^n$.

These two definitions of surface area come from Finsler geometry.
A smooth immersed surface in a normed space naturally
acquires the induced structure of a Finsler manifold exactly the same way as a 
surface in Euclidean space gets a Riemannian structure.
The surface areas in question can be regarded as Finsler volumes in the induced
Finsler metric and belong to its intrinsic geometry.

In Riemannian geometry, Riemannian volume bears two main meanings. Geometrically,
it is the Hausdorff measure. Dynamically, it is the projection of the Liouville measure
from the unit tangent bundle. In Finsler geometry, there is no
notion of volume which would enjoy the two properties.
The Busemann--Hausdorff and Holmes--Thompson definitions  
inherit one of the properties of Riemannian volume each.
The Busemann--Hausdorff volume of a $k$-dimensional Finsler manifold $M$ equals
the $k$-dimensional Hausdorff measure of the Finsler metric (see \cite{Bu47}). 
The Holmes--Thompson volume of $M$ (see \cite{HT}) is the (normalised by a suitable constant)
symplectic volume of the bundle of the unit balls in $T^*M$.

The two Finsler volumes can be expressed by the following  coordinate formulas
(for $\Omega \subset M$ identified with a subset in
$\R^k$, that is in one chart).
For the Busemann--Hausdorff volume one gets
$$
\vol^{bh}(\Omega) = {\ep_k} \int_\Omega m_k(B_x)^{-1} \,dm(x),
$$
and the Holmes--Thompson volume is given by
$$
\vol^{ht}(\Omega) = \frac1{\ep_k} \int_\Omega m(B^*_x) \,dm(x),
$$
where $m_k$ is the coordinate Lebesgue measure, 
$B_x$ is the unit ball of the Finsler norm at $x$: $B_x=\{v \in T_xM: \Phi(v) \le 1\}$.
Here we identify $T_xM$, $T^*_xM$ and $\R^k$ since $M=\R^k$.
The normalizing coefficient $\ep_k$ makes the volume definitions
agree with the Riemannian one for Riemannian manifolds.
\end{remark}

In the above definition of area densities as functions on $G(n,k)$,
the Euclidean structure of $\R^n$ is irrelevant.
Here is an affine-invariant definition.
Let $V$ be an $n$-dimensional vector space,
$\Lambda^k V$ the $k$th exterior power of $V$,
and $GC_k(V)\subset\Lambda^k V$ the $k$-dimensional Grassmannian cone,
that is the set of all simple $k$-vectors.
(A $k$-vector $\sigma\in\Lambda^k V$ is called \textit{simple}
if it is decomposable: $\sigma=v_1\wedge\dots\wedge v_k$
for some $v_1,\dots,v_k\in V$.)
A (translation invariant) \textit{$k$-dimensional density} in $V$ is
a continuous function $A\co GC_k(V)\to\R_+$ which is symmetric and positively homogeneous,
that is $A(\lambda\sigma)=|\lambda| A(\sigma)$ for all $\lambda\in\R$, $\sigma\in GC_k(V)$.
For a Lipschitz surface $S\co M\to V$
(parametrized by a smooth $k$-dimensional manifold $M$),
the integral $\int_S A$ is defined in an obvious way.
We refer to this integral as the $k$-dimensional surface area
associated with $A$ and denote it by $Area_A(S)$.

In the case of the Busemann--Hausdorff area in a normed space $(V,\|\cdot\|)$,
the density $A^{bh}\co GC_k(V)\to\R_+$ is defined as follows.
For a simple $k$-vector $\sigma=v_1\wedge\dots\wedge v_k$,
the value $A^{bh}(\sigma)$ equals the $k$-dimensional Hausdorff measure
(with respect to the metric defined by $\|\cdot\|$) of
the parallelotope spanned by the vectors $v_1,\dots,v_k$.
It can be expressed by the formula
$$
 A^{bh}(v_1\wedge\dots\wedge v_k) = \frac{\ep_k}{m_k(L^{-1}(B))},
$$
where $B$ is the unit ball of $\|\cdot\|$ and $L\co\R^k\to V$
is a linear map given by $L(e_i)=v_i$ for the standard
basis $(e_1,\dots,e_k)$ of $\R^k$.
We abuse notation and write $A^{bh}(S)$ instead of $Area_{A^{bh}}(S)$
for the Busemann--Hausdorff area of a surface $S$.

For a Lipschitz chain
$S=\sum a_i S_i$, $S_i\co\Delta \to \R^n$, where each $\Delta$ is
a standard simplex, we define $Area_A(S)=\sum |a_i| Area_A(S_i)$. 
The coefficients $a_i$ can be taken from
$\Z$, $\R$, or $\Z_2:=\Z/2\Z$. In the case of $\Z_2$,
the absolute values $|a_i|$ are defined as follows:
$|a_i|=0$ if $a_i=0$ and $|a_i|=1$ otherwise.
Note that a two-dimensional chain over $\Z$ or $\Z_2$
can be parameterized by a manifold (which is oriented in the case of $\Z$). 

A density $A\co GC_k(V) \to \R$ is said to be
\textit{convex} if it can be extended to a convex function on the vector
space $\Lambda^k V$ of all $k$-vectors.
The functional $Area_A$ is said to be \textit{semi-elliptic} over $\R$, $\Z$, or $\Z_2$
 (see \cite{Alm})
if, whenever the boundary $\pd S$ of a chain $S$ over the respective ring
is equal to the
boundary of a $k$-disc $D$  embedded into an affine $k$-plane, one
has $Area_A(S)\geq Area_A(D)$.

It is rather obvious that convexity of $A$ implies semi-ellipticity
of $Area_A$ over $\R$ and $\Z$.
The converse is true over $\R$ but in general may fail over $\Z$ (see \cite{BI04}).

For the Busemann-Hausdorff surface area density Busemann \cite{Bu49}
proved that it is convex in co-dimension one, that is if $\dim V=k+1$,
and left the general case as a conjecture (see e.g.\ \cite[p.~37]{BS60}, \cite[p.~180]{Bu61}
or \cite[p.~310, Problem 7.7.1]{T}). 
The main result of this paper is a proof of this conjecture for the 2-dimensional 
Busemann-Hausdorff surface area:

\begin{theorem}\label{main}
In every finite-dimensional normed space $V$, the two-dimensional Busemann--Hausdorff
area density admits a convex extension to $\Lambda^2 V$.
\end{theorem}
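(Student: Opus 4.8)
The plan is to prove convexity through the dual formulation via \emph{calibrating $2$-forms}. The starting point is the elementary convex-analytic fact that a continuous symmetric positively $1$-homogeneous function $A$ on the cone $GC_2(V)$ extends to a convex function on $\Lambda^2 V$ if and only if for every simple $2$-vector $\sigma_0$ there is a linear functional $\omega\in(\Lambda^2 V)^*$ (that is, a $2$-form) with $\omega\le A$ on $GC_2(V)$ and $\omega(\sigma_0)=A(\sigma_0)$: the candidate $\widehat A(\xi)=\sup_\omega\omega(\xi)$, over all admissible $\omega$, is a supremum of linear functionals and hence convex, it is $\le A$ on the cone by admissibility, and the touching condition forces equality there, so $\widehat A$ is the minimal convex extension. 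Thus I would first fix an auxiliary Euclidean structure on $V$, normalize $\sigma_0$ to a unit simple $2$-vector spanning a plane $P_0$, and record that for a unit simple $2$-vector $\sigma_P$ spanning a plane $P$ one has $A^{bh}(\sigma_P)=\ep_2/m_2(B\cap P)$, where $B$ is the unit ball. The calibration condition then reads $\omega(\sigma_P)\,m_2(B\cap P)\le\ep_2$ for all planes $P$, with equality at $P_0$. Since such an $\omega$ is literally a calibration, its construction will simultaneously reprove minimality.

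Second, I would exploit the feature special to $k=2$: every planar central section $B\cap P$ is a centrally symmetric convex body \emph{in a $2$-plane}, hence a zonoid, and when $B$ is a centrally symmetric polytope the section is a zonogon, a Minkowski sum of segments with edge vectors $u_i\in P$, whose Euclidean area is the explicit expression $m_2(B\cap P)=\sum_{i<j}|u_i\wedge u_j|$. Approximating an arbitrary unit ball $B$ by centrally symmetric polytopes and using continuity of $m_2(B\cap P)$ and of the candidate $\omega$ in $B$, I would reduce to the polytopal case. This step isolates the reason the argument is confined to two dimensions: only in the plane is every centrally symmetric body automatically a zonoid, so only here is the area of a section given by such a clean bilinear sum.

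Third, I would construct the candidate $\omega$ directly from the edge data of the reference section $Q_0=B\cap P_0$, writing it as a signed combination of simple $2$-forms built from the edge vectors $u_i$ of the zonogon $Q_0$, normalized so that $\omega(\sigma_{P_0})\,m_2(Q_0)=\ep_2$ holds by construction. Evaluating $\omega$ on an arbitrary $\sigma_P$ and invoking the zonogon area formula then converts the global comass inequality $\omega(\sigma_P)\,m_2(B\cap P)\le\ep_2$ into a purely two-dimensional statement: the Euclidean area of a centrally symmetric convex polygon dominates an explicit bilinear functional of its edge vectors, with equality at the reference configuration. This planar estimate is the \emph{new inequality for the area of a symmetric polygon} advertised in the abstract.

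The hard part will be proving that planar inequality; the remaining ingredients are bookkeeping in convex analysis and an approximation argument. I would not expect Busemann's codimension-one technique to carry over: in codimension one, convexity follows from the convexity of the intersection body of a centrally symmetric convex body (a theorem of Busemann), but for $2$-planes in a higher-dimensional $V$ there is no such body to invoke, which is precisely why the conjecture resisted for so long. Once the planar inequality is established, an admissible $\omega$ exists for every $P_0$, so $\widehat{A^{bh}}=\sup_\omega\omega$ is a convex extension in the polytopal case, and passing to a general norm by continuity completes the proof.
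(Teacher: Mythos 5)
Your overall skeleton --- calibrating $2$-forms as support functionals for the density, reduction to a polyhedral unit ball, and a planar inequality about symmetric polygons as the core --- coincides with the paper's. But your third step, the actual construction of $\omega$, is wrong as stated, and this is the crux of the whole proof. You propose to build $\omega$ from the edge vectors $u_i$ of the reference section $Q_0=B\cap P_0$ alone, dualized through the auxiliary Euclidean structure. Since all the $u_i$ lie in $P_0$, every $2$-form $u_i^\flat\wedge u_j^\flat$ is a scalar multiple of the pullback, under orthogonal projection onto $P_0$, of the area form of $P_0$; hence any such $\omega$ equals $c\,\pi_{P_0}^*(dA_{P_0})$ and carries no information about the shape of $B$ outside $P_0$. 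Such a form cannot calibrate. Concretely, take $V=\R^3$, $B=\{(x,y,z)\colon |x-2z|\le 1,\ |y|\le 1,\ |z|\le 1\}$ (a sheared box), and $P_0=\{z=0\}$. Then $B\cap P_0$ is the square of area $4$, so your normalization forces $\omega=\tfrac{\pi}{4}\,dx\wedge dy$. For the plane $P$ spanned by $e_1+\tfrac12 e_3$ and $e_2$, the section $B\cap P$ is a rectangle of Euclidean area $4\sqrt5$, while on the unit simple $2$-vector $\sigma_P$ one gets $|\omega(\sigma_P)|=\tfrac{\pi}{4}\cdot\tfrac{2}{\sqrt5}$, so $|\omega(\sigma_P)|\,m_2(B\cap P)=2\pi>\pi=\ep_2$: your comass inequality fails. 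The missing idea is that the calibrator must be assembled from covectors supporting the \emph{whole} ball: for each side $[a_ia_{i+1}]$ of $Q_0$ choose a linear functional $F_i\co V\to\R$ with $F_i\le 1$ on all of $B$ and $F_i=1$ on that side (a Hahn--Banach extension of the side-supporting function of $Q_0$, not its Euclidean extension), and set $\omega=\pi\sum_{i<j}p_ip_j\,F_i\wedge F_j$, where $p_i$ is the fraction of the area of $Q_0$ subtended by the $i$-th pair of sides. In the sheared example this gives $\omega=\tfrac{\pi}{4}(dx-2\,dz)\wedge dy$, which vanishes on the offending plane. The global property $F_i|_B\le 1$ is exactly what makes the restrictions $f_i=F_i\circ I$ satisfy $f_i|_K\le 1$ for $K=I^{-1}(B)$ in an \emph{arbitrary} plane, and thereby reduces the comass bound to a statement about a single polygon and functionals bounded by $1$ on it.

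Even after that correction, the planar inequality you defer is not a generic ``bilinear functional of edge vectors'' statement: the choice of coefficients is essential, and you have not identified it. With $p_i$ as above, the needed inequality is
\begin{equation*}
\sum_{1\le i<j\le n} p_ip_j\,|f_i\wedge f_j| \le \frac{1}{A(K)}
\end{equation*}
for all linear $f_i$ with $f_i|_K\le 1$. The paper proves it by first using convexity in each variable $f_i$ to reduce to the case where each $f_i$ supports a side of $K$, and then comparing $K$ with the polygon $K'$ whose edge vectors are rescaled by $\lambda_i=p_i/q_i$ ($q_i$ being the area fractions of $K$ itself): the left-hand side equals $A(K')/A(K)^2$, the constraint $\sum p_i=1$ translates into the mixed-area identity $V(K,K')=A(K)$, and Minkowski's inequality $V(K,K')^2\ge A(K)A(K')$ gives $A(K')\le A(K)$. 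So the two genuinely missing ingredients in your proposal are: (1) supporting functionals of $B$ in place of edge vectors of the section, and (2) the specific weights $p_ip_j$ together with the mixed-area/Minkowski argument that makes the planar inequality true for them.
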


Hence the area is semi-elliptic over $\Z$, that is, planar discs
minimize the area among orientable surfaces with the same boundary.
Furthermore, an easy analysis of the proof shows that it 
also works in the non-orientable case:

\begin{theorem}\label{Z2}
In every finite-dimensional normed space $V$, the two-dimensional
Busemann--Hausdorff area density is semi-elliptic over $\Z_2$.
That is, every two-dimensional affine disc in $V$ minimizes
the Busemann--Hausdorff area among all
compact Lipschitz surfaces with the same boundary.
\end{theorem}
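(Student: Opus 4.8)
The plan is to deduce Theorem \ref{Z2} from Theorem \ref{main} by combining the convexity of the density with a mod-$2$ degree (covering) argument that replaces the Stokes/calibration step used in the orientable case.

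First I would reduce to a calibration statement. Fix the affine $2$-disc $D$, lying in an affine plane $P$; after a translation (which changes neither the density nor the area) I may assume $P$ is a linear subspace $P_0$ with unit Euclidean $2$-vector $\tau$. By Theorem \ref{main} the density $A=A^{bh}$ extends to a convex, positively homogeneous function $\ov A$ on $\Lambda^2 V$, so there is a linear functional supporting $\ov A$ at $\tau$, i.e.\ a constant-coefficient $2$-form $\omega$ with $\omega(\tau)=A(\tau)$ and $\omega(\sigma)\le A(\sigma)$ for every simple $2$-vector $\sigma$; since $A$ is symmetric this gives $|\omega(\sigma)|\le A(\sigma)$ with equality on $P_0$. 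The point at which the argument must be upgraded is that, for the orientable case, one integrates the closed form $\omega$ and applies Stokes, whereas over $\Z_2$ no global orientation is available. The remedy is to take $\omega$ of a very special shape: an inspection of the proof of Theorem \ref{main} shows that the calibrating form at $P_0$ may be chosen as $\omega=\pi^*\alpha$, where $\pi\co V\to P_0$ is a \emph{linear projection} onto $P_0$ and $\alpha=A(\tau)\cdot(\text{Euclidean area form of }P_0)$. Equivalently, convexity furnishes a linear projection onto $P_0$ that does not increase Busemann--Hausdorff area, normalized to be area-preserving on $P_0$ itself.

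Granting such a projection $\pi$, I would run the calibration as follows. For any compact Lipschitz surface (or $\Z_2$-chain) $S$ with $\pd S=\pd D$, choose a local orientation and let $\tau_S$ denote the corresponding unit tangent $2$-vector; the quantity $|\omega(\tau_S)|=|\alpha(\pi_*\tau_S)|$ is independent of this choice, hence a well-defined density on $S$, and by the calibration inequality
$$
Area_A(S)=\int_S A(\tau_S)\,dm\ \ge\ \int_S |\omega(\tau_S)|\,dm=\int_S|\pi^*\alpha|.
$$
Now $\pi\circ S$ is a $\Z_2$-chain in $P_0\cong\R^2$ with $\pd(\pi_*S)=\pi_*(\pd S)=\pi_*(\pd D)=\pd D$, so for almost every interior point $y$ of $D$ the mod-$2$ degree of $\pi|_S$ over $y$ equals that of $\pi|_D$, namely $1$; in particular the multiplicity function $N(\pi|_S,\,\cdot\,)$ is at least $1$ on $D$. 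The area formula for Lipschitz maps then yields
$$
\int_S|\pi^*\alpha|=\int_{P_0}N(\pi|_S,y)\,|\alpha|(y)\,dy\ \ge\ \int_D|\alpha|=\int_D\alpha=Area_A(D),
$$
the last equalities using that on $P_0$ the form $\alpha$ equals $A(\tau)$ times Euclidean area and that $A$ is homogeneous. Chaining the two displays gives $Area_A(S)\ge Area_A(D)$, which is Theorem \ref{Z2}.

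The main obstacle is the step where convexity is actually used, namely that the calibrating form at $P_0$ can be taken of the projection type $\omega=\pi^*\alpha$ (rank $\le 2$). A generic supporting functional of $\ov A$ is a $2$-form of arbitrary rank, and for such a form the covering argument above has no analogue: the degree argument is special to maps into a $2$-dimensional target, and this is precisely what compensates for the missing orientation. I therefore expect the bulk of the work to lie in extracting, from the construction proving Theorem \ref{main}, a short (non-area-increasing) linear projection onto each $2$-plane, equivalently a rank-$2$ calibration — once this is in hand, the passage from the orientable ($\Z$, $\R$) statement to the $\Z_2$ statement is the routine substitution of the mod-$2$ covering argument for Stokes' theorem.
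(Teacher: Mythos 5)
There is a genuine gap, and it is exactly at the point you flagged yourself. Your whole argument rests on the claim that ``an inspection of the proof of Theorem \ref{main} shows that the calibrating form at $P_0$ may be chosen as $\omega=\pi^*\alpha$'' for a linear projection $\pi\co V\to P_0$, i.e.\ that a \emph{decomposable} (rank-2) calibrator exists. This is false as a statement about the paper's construction: the calibrator built in Section \ref{sec-main} is $\omega=\pi\cdot\sum_{i<j}p_ip_j\,F_i\wedge F_j$, and as soon as four of the functionals $F_i$ with nonzero weights are linearly independent in $V^*$ (which already happens for generic polyhedral norms on $\R^4$), one computes that the coefficient of $F_1\wedge F_2\wedge F_3\wedge F_4$ in $\omega\wedge\omega$ is $2p_1p_2p_3p_4\,(1-1+1)\neq 0$, so $\omega\wedge\omega\neq 0$ and $\omega$ is not decomposable, hence not of the form $\pi^*\alpha$. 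More importantly, the existence of a BH-area-nonincreasing linear projection onto every $2$-plane (which is what a rank-2 calibrator amounts to) is a substantially stronger assertion than Theorem \ref{main}; it is not proved anywhere, and nothing in the convexity statement forces the supporting functional to be decomposable. So your proposal defers the entire difficulty to an unestablished claim, and the rest of the argument, while correct, is conditional on it.

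The paper's remedy keeps your covering argument but abandons the insistence on a \emph{linear} calibrator. Instead of a single $2$-form it uses the sublinear density $\alpha(\sigma)=\pi\cdot\sum_{i<j}p_ip_j\,|(F_i\wedge F_j)\cdot\sigma|$, i.e.\ a positive combination of absolute values of decomposable forms. Proposition \ref{main-prop} is stated precisely with the absolute values \emph{inside} the sum, which gives $\alpha\le A^{bh}$ on $GC_2(V)$ with equality on $\Lambda^2 P$; this is stronger than what Theorem \ref{main} needs, and it is this stronger form that Theorem \ref{Z2} uses. Each term $|F_i\wedge F_j|$ is the pullback of the Euclidean area density under the linear map $F_{ij}=(F_i,F_j)\co V\to\R^2$, whose target is $2$-dimensional, so your mod-$2$ degree argument applies to each composition $F_{ij}\circ S$ separately: since $S|_{\pd M}$ has degree $1$ onto $\pd D$, the image $F_{ij}\circ S(M)$ covers $F_{ij}(D)$, whence $A(F_{ij}\circ S)\ge A(F_{ij}(D))$. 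Summing over $i<j$ with weights $p_ip_j$ gives $\int_S\alpha\ge\int_D\alpha$, and combining with the calibration inequality and its equality case yields $A^{bh}(S)\ge A^{bh}(D)$. In short: the missing idea is not to extract a rank-2 calibrator (which likely does not exist in general), but to notice that the specific calibrator of Theorem \ref{main} is a sum of decomposable pieces, and that the degree argument --- the one piece of your proof that genuinely works --- can be run on each piece individually.
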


\begin{remark}
Since we do not assume that the norm of $V$ is strictly convex,
the area functional is not elliptic in general.
However it is easy to show that the theorems imply
that the Busemann--Hausdorff area is elliptic
if the norm is strictly (quadratically) convex.
\end{remark}

\begin{remark}
For the Holmes--Thompson surface area,
it had been noticed by Busemann, Ewald and Shephard \cite{BES}
that the density fails to be convex
already for the two-dimensional surface area for a certain norm on $\R^4$. 
Hence it is not elliptic over $\R$ (\cite{BI04}, see also \cite{BI-annals} for explicit examples.)
It turns out however that  discs in affine 2-planes minimize the Homes--Thompson 
area among all surfaces (with the same boundary) parametrized by topological discs 
(\cite{BI-annals}). The problem of whether the Holmes--Thomson area
is semi-elliptic over $\Z$ (that is, for competing surfaces that may have handles)
and beyond dimension~2 remains widely open and intriguing.
\end{remark}

The rest of the paper is organized as follows.
Sections \ref{sec-main} and \ref{sec-Z2} are devoted to proofs 
of Theorems \ref{main} and \ref{Z2} respectively.
The proof of the main Theorem \ref{main} goes via constructing
calibrating forms with constant coefficients, and the construction of the forms is based on 
a certain inequality for the Euclidean area of a convex centrally-symmetric polygon. As it was
pointed out by the anonymous referees, the key Proposition \ref{main-prop} is connected to 
investigations related to areas of random triangles carried out 
by Blaschke and others in response to Sylvester's Four-Point Problem. 
Namely, the proposition actually describes the probability 
measure supported on a centrally-symmetric convex curve so as to maximize 
the expectation of the area 
of a triangle formed by the center and two random points on the curve.

In
Section \ref{sec-kdim} we discuss prospectives and limitations of our methods for feasible generalizations
to higher dimensions.

\section{Proof of Theorem \ref{main}}
\label{sec-main}

\begin{definition}
Let $V$ be a finite-dimensional vector space,
$A\co GC_k(V)\to\R_+$ a $k$-dimensional density,
and $P\subset V$ a $k$-dimensional linear subspace. 
A \textit{calibrator} (or a \textit{calibrating form}) for $P$ with respect to $A$ is
an exterior $k$-form $\omega\in\Lambda^k V^*$ such that
for every simple $k$-vector $\sigma\in GC_k(V)$, one has $|\omega(\sigma)|\le A(\sigma)$,
and this inequality turns into equality if $\sigma\in\Lambda^k P$.
\end{definition}

One easily sees that $A$ admits a convex
extension to $\Lambda^k V$ if and only if
every 2-plane $P$ admits a calibrator.
(Indeed, an exterior $k$-form is a linear function
on $\Lambda^kV$, and a calibrator is a linear support for $A$.)
Hence our plan is to give an explicit construction of such calibrators
for $k=2$ and $A=A^{bh}$.

Let $V$ be a finite-dimensional normed space and $B$ its unit ball.
By means of approximation, it suffices to prove Theorem \ref{main}
in the case when $B$ is a polyhedron.
Fix a two-dimensional linear subspace $P\subset V$.
Our goal is to construct a calibrator for $P$
with respect to the Busemann--Hausdorff area density.

Consider the intersection $B\cap P$. It is a centrally symmetric polygon
$a_1a_2\dots a_{2n}$ (whose center is the origin $0$ of $V$). 
For each $i=1,\dots,n$, let $F_i\co V\to\R$
be a supporting linear function to $B$ such that $F_i=1$
on the segment $[a_ia_{i+1}]$.
For each $i=1,\dots,n$, let
$$
 p_i = \frac{2A(\triangle 0a_ia_{i+1})}{A(B\cap P)}
$$
where $A$ is an (arbitrarily normalized) area form on $P$
and $\triangle 0a_ia_{i+1}$ is the triangle with vertices $0,a_i,a_{i+1}$.
Note that $\sum p_i=1$.
Define a 2-form $\omega\in\Lambda^2 V^*$ by
$$
 \omega = \pi \cdot \!\!\! \sum_{1\le i<j\le n} p_ip_j\, F_i\wedge F_j .
$$
We are going to prove that $\omega$ is a desired calibrator for $P$.

Consider a simple 2-vector $\sigma=v_1\wedge v_2$ where
$v_1,v_2\in V$ are linearly independent vectors.
We need to prove that
$$
  |\omega(v_1\wedge v_2)| \le A^{bh}(v_1\wedge v_2)
$$
with equality in the case when $v_1,v_2\in P$.
Identify the plane $(v_1,v_2)$ with $\R^2$ by
means of the linear embedding $I\co \R^2\to V$
that takes the standard basis of $\R^2$ to $v_1$ and $v_2$.
Then
$$
 |\omega(v_1\wedge v_2)| = |I^*\omega| = 
 \pi \cdot \biggl| \sum_{1\le i<j\le n} p_ip_j\, f_i\wedge f_j \biggr|
$$
where $f_i=I^*F_i=F_i\circ I$ and the norms of 2-forms
are taken with respect to the Euclidean structure of $\R^2$.
The fact that $F_i$ is a supporting
function for $B$ implies that $f_i\le 1$ on the set $K=I^{-1}(B)$
which corresponds to the unit ball of the norm restricted to 
the plane $(v_1,v_2)$.
By the definition of the Busemann--Hausdorff area,
$$
 A^{bh}(v_1\wedge v_2) = \frac \pi{A(K)}
$$
where $A$ is the Euclidean area.
Thus the problem reduces to the following statement
from convex geometry on the plane.

\begin{proposition}\label{main-prop}
Let $K\subset\R^2$ be a symmetric convex polygon,
$f_1,\dots,f_n\co\R^2\to\R$ are linear functions such that
$f_i|_K\le 1$ for all $i$,
and $p_1,\dots,p_n$ are nonnegative real numbers such that $\sum p_i=1$.
Then
$$
 \biggl| \sum_{1\le i<j\le n} p_ip_j\, f_i\wedge f_j \biggr|
 \le \sum_{1\le i<j\le n} p_ip_j\, |f_i\wedge f_j|
 \le \frac1{A(K)} .
$$
In addition, if $K$ is a convex $2n$-gon $a_1a_2\dots a_{2n}$,
$f_i$ are supporting functions of $K$ corresponding to its sides
(that is, such that $f_i=1$ on $[a_ia_{i+1}]$),
and $p_i = 2A(\triangle 0a_ia_{i+1})/A(K)$, then the above inequalities
turn into equalities.
\end{proposition}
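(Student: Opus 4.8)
My plan is to treat the two inequalities separately, the first being essentially a triviality and the second carrying all the content. Writing each linear function as $f_i=\langle u_i,\cdot\rangle$ for a vector $u_i\in\R^2$, we get $f_i\wedge f_j=\det(u_i,u_j)\,dx\wedge dy$, so $|f_i\wedge f_j|=|\det(u_i,u_j)|$, and the left inequality is just the triangle inequality $\bigl|\sum p_ip_j\det(u_i,u_j)\bigr|\le\sum p_ip_j|\det(u_i,u_j)|$ for real numbers. Since $K$ is symmetric, the hypothesis $f_i|_K\le1$ means $|\langle u_i,x\rangle|\le1$ for all $x\in K$, i.e.\ $u_i$ lies in the polar body $K^\circ$, equivalently $h_K(u_i)\le1$ where $h_K$ is the support function. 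It remains to prove $Q:=\sum_{i<j}p_ip_j|\det(u_i,u_j)|\le 1/A(K)$. I would read the left-hand side as an area: setting $Z:=\tfrac12\sum_i p_i[-u_i,u_i]$, the standard formula for the area of a zonogon gives $A(Z)=\sum_{i<j}p_ip_j|\det(u_i,u_j)|=Q$. (Equivalently $Q$ is twice the expected area of the triangle $0UU'$ spanned by two independent $K^\circ$-valued random vectors, the probabilistic reading mentioned in the introduction.) So the goal is $A(Z)\le 1/A(K)$.

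One must first notice that the naive estimate is hopeless and deliberately avoid it: since each $u_i\in K^\circ$ and $\sum p_i=1$, one has $Z\subseteq\tfrac12K^\circ$ up to translation, giving only $A(Z)\le\tfrac14A(K^\circ)$, and the Mahler bound $A(K)A(K^\circ)\ge8$ shows this loses a factor of at least $2$. The key idea I would use instead is to pair $Z$ not with $K^\circ$ but with a rotated copy of $K$ itself through the \emph{mixed area}, exploiting two facts. First, a centrally symmetric convex polygon is a zonogon: $K=\sum_{k}[-w_k,w_k]$ for suitable vectors $w_k$ (half the edge vectors), and then $\sum_k|\langle u,w_k\rangle|=h_K(u)$. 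Second, if $J$ denotes rotation by $90^\circ$, then $\det(u,Jw)=\langle u,w\rangle$. Using bilinearity of the mixed area together with the elementary value $A([-u,u],[-v,v])=2|\det(u,v)|$ for two segments, I would compute
$$
 A(Z,JK)=\sum_i p_i\sum_k|\det(u_i,Jw_k)|=\sum_i p_i\sum_k|\langle u_i,w_k\rangle|=\sum_i p_i\,h_K(u_i)\le\sum_i p_i=1 .
$$

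The inequality then closes with Minkowski's first inequality for mixed areas in the plane, $A(C,D)^2\ge A(C)A(D)$: taking $C=Z$ and $D=JK$ and using $A(JK)=A(K)$ gives $Q\cdot A(K)=A(Z)\,A(JK)\le A(Z,JK)^2\le1$, which is exactly the desired bound. The step I expect to be the crux is precisely the choice of this pairing — recognizing that the right comparison body is the $90^\circ$-rotated polygon $JK$ (so that $\det(u,Jw)$ collapses to an inner product and the polar constraint $h_K(u_i)\le1$ enters cleanly), and that Minkowski's inequality is what converts a linear-looking mixed-area bound into the quadratic product bound $Q\cdot A(K)\le1$. Everything else is bookkeeping about zonogons and mixed areas.

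Finally I would verify the equality statement by checking the two places where the chain is tight. The left (triangle) inequality is an equality because the dual vertices $u_i$ of the $2n$-gon occur in angular order, so all $\det(u_i,u_j)$ with $i<j$ share one sign. For the right inequality, equality in Minkowski's inequality forces $Z$ and $JK$ to be homothetic, and this is exactly what the prescribed data arrange. Since $f_i=1$ on the edge $[a_ia_{i+1}]$ we have $u_i\perp(a_{i+1}-a_i)$, so $Ju_i$ is parallel to the edge vector, i.e.\ to the generator $w_i$ of $K$; and a short computation using $p_i=2A(\triangle 0a_ia_{i+1})/A(K)$ shows $p_i|u_i|$ is proportional to $|w_i|$, whence $JZ=\tfrac1{A(K)}K$, that is $Z=\tfrac1{A(K)}JK$ (using $J^{-1}K=JK$ as $K$ is centrally symmetric). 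Together with $h_K(u_i)=1$ this makes both inequalities in the chain equalities, and $A(Z)=A(JZ)=A(K)/A(K)^2=1/A(K)$ confirms the asserted value.
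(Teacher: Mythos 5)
Your proof is correct, and while its crux is the same as the paper's --- Minkowski's first inequality $A(C,D)^2\ge A(C)A(D)$ for mixed areas --- you reach it by a genuinely different and in some respects cleaner route. The paper cannot attack arbitrary admissible $f_i$ directly: it first reduces to the case where each $f_i$ is a supporting function of a side of $K$ (the left-hand side is convex in each $f_i$ separately, so its maximum over the polar polygon is attained at vertices), then must merge coinciding functions, normalize signs, dispose of $n=1$, and apply the key lemma to the possibly larger polygon $\bigcap_i\{x:|f_i(x)|\le1\}\supset K$. Only in that reduced situation does it build a comparison polygon $K'$ in the \emph{primal} picture, with edge vectors $v_i'=\lambda_i v_i$, $\lambda_i=p_i/q_i$, prove $\sum_{i<j}p_ip_j|f_i\wedge f_j|=A(K')/A(K)^2$, compute the mixed area $V(K,K')=A(K)$ from $\sum p_i=1$, and invoke Minkowski. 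Your zonogon $Z=\tfrac12\sum_i p_i[-u_i,u_i]$, built from the \emph{dual} vectors, makes the whole reduction unnecessary: the computation $A(Z,JK)=\sum_i p_i h_K(u_i)\le 1$ uses only $u_i\in K^\circ$ and $\sum p_i=1$, so general $f_i$'s are handled in one pass. The two applications of Minkowski's inequality are in fact the same one in disguise: in the supporting-function case one checks that $Z=\tfrac1{A(K)}J^{-1}K'$, a rotated and rescaled copy of the paper's comparison polygon, and the paper's Lemma \ref{lemma1} (the identity $A(K)=\sum_{i<j}|v_i\wedge v_j|$) is exactly the zonogon area formula you quote. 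What your organization buys is uniformity and transparency --- no vertex reduction, no case analysis, and the polar constraint $h_K(u_i)\le1$ enters visibly through the rotation identity $\det(u,Jw)=\langle u,w\rangle$; what the paper's buys is that everything stays in the primal picture using only elementary manipulations of edge vectors. (One immaterial slip in an aside: $Q$ is the expected area of the random triangle $0UU'$, not twice that expectation, since $\sum_{i,j}=2\sum_{i<j}$ cancels the $\tfrac12$ in the triangle area.)
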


The proof of Proposition \ref{main-prop} occupies the rest of this section.
It consists of several elementary lemmas.

\begin{lemma}\label{lemma1}
Let $K=a_1a_2\dots a_{2n}$ be a symmetric $2n$-gon in $\R^2$.
Let $v_i = \overrightarrow {a_ia_{i+1}}$ for $i=1,\dots,n$. Then
$$
 A(K) = \sum_{1\le i<j\le n} |v_i\wedge v_j|
 = \biggl| \sum_{1\le i<j\le n} v_i\wedge v_j \biggr| .
$$
\end{lemma}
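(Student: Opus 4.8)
The plan is to reduce both equalities to a single signed-area computation, combining a sign observation that comes from convexity with the shoelace formula adapted to the central symmetry. Throughout I place the center of $K$ at the origin, so that $a_{i+n}=-a_i$ and hence $v_{i+n}=-v_i$ for all $i$, and I orient the boundary counterclockwise.

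First I would record the key sign fact. Since $K$ is convex and traversed counterclockwise, the edge vectors $v_1,\dots,v_{2n}$ have strictly increasing arguments, turning by a total of $2\pi$ around the boundary. Because $v_{n+1}=-v_1$ points exactly opposite to $v_1$, the directions of $v_1,\dots,v_n$ sweep an angular range of less than $\pi$; consequently, for every pair $i<j$ the angle from $v_i$ to $v_j$ lies in $(0,\pi)$, so $v_i\wedge v_j>0$. With all the terms $v_i\wedge v_j$ ($i<j$) of one sign, we obtain at once
$$
 \sum_{1\le i<j\le n}|v_i\wedge v_j| = \sum_{1\le i<j\le n} v_i\wedge v_j = \biggl|\sum_{1\le i<j\le n} v_i\wedge v_j\biggr|,
$$
which is the second asserted equality and reduces the whole lemma to proving $A(K)=\sum_{i<j} v_i\wedge v_j$.

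For this last identity I would use the shoelace formula $A(K)=\tfrac12\sum_{k=1}^{2n} a_k\wedge a_{k+1}$, valid with a plus sign by the counterclockwise orientation. The central symmetry $a_{k+n}=-a_k$ makes the contributions of the two halves of the boundary equal, so $A(K)=\sum_{k=1}^n a_k\wedge a_{k+1}=\sum_{k=1}^n a_k\wedge v_k$. Writing each vertex in terms of the edge vectors, $a_k=a_1+\sum_{l<k}v_l$, and solving $a_{1+n}=-a_1$ for the base point gives $a_1=-\tfrac12\sum_{l=1}^n v_l$. Substituting and expanding, the fully symmetric double sum $\sum_{k,l}v_l\wedge v_k=(\sum_l v_l)\wedge(\sum_k v_k)$ vanishes, and what survives is exactly $\sum_{l<k} v_l\wedge v_k=\sum_{i<j}v_i\wedge v_j$, as required.

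I expect no serious obstacle here, as the argument is elementary. The only points demanding care are the orientation bookkeeping --- making sure the counterclockwise traversal simultaneously yields a positive shoelace area and positive cross products $v_i\wedge v_j$, so that the signed computation and the absolute-value statement agree --- and the determination of the base vertex $a_1$ from the symmetry. Conceptually, the identity $A(K)=\sum_{i<j}|v_i\wedge v_j|$ is the classical area formula for the zonogon $K=\sum_i[-\tfrac12 v_i,\tfrac12 v_i]$, and one could instead invoke the standard tiling of a zonotope into parallelograms indexed by the pairs $\{i,j\}$; I prefer the direct shoelace computation since it is self-contained.
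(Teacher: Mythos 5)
Your proof is correct. It differs from the paper's argument only in how the area is expanded in the edge vectors, but the difference is worth noting. The paper exploits the central symmetry by writing $A(K)=2A(a_1a_2\dots a_{n+1})$ and triangulating this half-polygon from the vertex $a_1$, so that each unsigned triangle area $A(\triangle a_1a_ja_{j+1})=\tfrac12\,\bigl|\overrightarrow{a_1a_j}\wedge v_j\bigr|$ expands (using $\overrightarrow{a_1a_j}=v_1+\dots+v_{j-1}$ and the same-orientation fact) into $\tfrac12\sum_{i<j}|v_i\wedge v_j|$; summing gives the first equality directly, and the second equality is then the same sign observation you make. You instead prove the sign fact first, reduce everything to the signed identity $A(K)=\sum_{i<j}v_i\wedge v_j$, and verify it by the shoelace formula around the center, solving $a_{n+1}=-a_1$ to get $a_1=-\tfrac12\sum_l v_l$ and letting the symmetric double sum $\bigl(\sum_l v_l\bigr)\wedge\bigl(\sum_k v_k\bigr)=0$ kill the base-point contribution. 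Your route keeps all quantities signed until the very end, which makes the two asserted equalities fall out of a single computation and connects cleanly to the zonogon picture you mention; the paper's route avoids signed bookkeeping and any need to locate $a_1$ relative to the center, at the cost of treating the two equalities separately. Incidentally, your careful justification of the sign fact (edge directions of $v_1,\dots,v_n$ sweeping less than $\pi$ because $v_{n+1}=-v_1$) supplies a detail the paper only asserts, and your expansion is also free of the small index slip in the paper's displayed formula, where $\sum_{i=1}^{j}|v_i\wedge v_j|$ should read $\sum_{i=1}^{j-1}|v_i\wedge v_j|$.
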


\begin{proof}
The second identity follows from the fact that all pairs $(v_i,v_j)$,
${1\le i<j\le n}$, are of the same orientation.
To prove the first one, observe that
$$
 A(K) = 2 A (a_1a_2\dots a_{n+1}) = 2\sum_{j=2}^{n} A(\triangle a_1a_ja_{j+1})
$$
since $K$ is symmetric. Further,
$$
 A(\triangle a_1a_ia_{i+1}) = \frac12 |\overrightarrow{a_1a_j}\wedge\overrightarrow{a_ja_{j+1}}|
  = \frac12 \sum_{i=1}^j |v_i\wedge v_j|
$$
since $\overrightarrow{a_1a_j}=v_1+v_2+\dots+v_{j-1}$ and all pairs $(v_i,v_j)$, $i<j$,
are of the same orientation. Plugging the second identity into the first one yields the result.
\end{proof}

The following lemma takes care of the equality case in Proposition \ref{main-prop}.

\begin{lemma}\label{lemma2}
Let $K=a_1a_2\dots a_{2n}$ be a symmetric $2n$-gon in $\R^2$.
For each $i=1,\dots,n$, let $v_i=\overrightarrow{a_ia_{i+1}}$,
$f_i\co\R^2\to\R$ the linear function
such that $f_i=1$ on $[a_ia_{i+1}]$ and $p_i = 2A(\triangle 0a_ia_{i+1})/A(K)$.
Then
$$
p_ip_j |f_i\wedge f_j| = \frac1{A(K)^2} |v_i\wedge v_j| .
$$
for all $i,j$, and therefore
$$
 \biggl|\sum_{1\le i<j\le n} p_ip_j\, f_i\wedge f_j \biggr|
 = \sum_{1\le i<j\le n} p_ip_j\, |f_i\wedge f_j|
 = \frac1{A(K)} .
$$
\end{lemma}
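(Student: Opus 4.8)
The plan is to reduce everything to the single pointwise identity $p_ip_j|f_i\wedge f_j| = A(K)^{-2}|v_i\wedge v_j|$, since the two displayed equalities then follow immediately by summing over $i<j$ and invoking Lemma~\ref{lemma1}. So I would first establish this per-pair identity by translating the ``dual'' data $f_i$ into the ``primal'' edge data $v_i$, and only afterwards deal with the summation and the sign bookkeeping.

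To prove the pointwise identity, I would represent each linear function $f_i$ by its gradient covector $n_i$, so that $f_i(x)=\langle n_i,x\rangle$ and $|f_i\wedge f_j|=|n_i\wedge n_j|$. Two geometric observations drive the computation. First, since $f_i\equiv 1$ on the edge $[a_ia_{i+1}]$ we have $f_i(v_i)=0$, i.e.\ $n_i\perp v_i$. Second, the triangle $\triangle 0a_ia_{i+1}$ has base $|v_i|$ and height equal to the distance from the origin to the supporting line $\{f_i=1\}$, which is $1/|n_i|$; hence $2A(\triangle 0a_ia_{i+1})=|v_i|/|n_i|$ and therefore $p_i=|v_i|/(|n_i|A(K))$. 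Writing $J$ for the rotation by $90^\circ$, the orthogonality $n_i\perp v_i$ lets me set $n_i=\lambda_i Jv_i$; then $|n_i|=|\lambda_i|\,|v_i|$ and $|n_i\wedge n_j|=|\lambda_i\lambda_j|\,|v_i\wedge v_j|$ because $J$ preserves the determinant. Substituting these expressions into $p_ip_j|f_i\wedge f_j|$ cancels the factors $|\lambda_i|,|\lambda_j|$ and $|v_i|,|v_j|$ and leaves exactly $A(K)^{-2}|v_i\wedge v_j|$, as desired.

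With the pointwise identity in hand, summing over $i<j$ and using Lemma~\ref{lemma1} (which gives $\sum_{i<j}|v_i\wedge v_j|=A(K)$) yields $\sum_{i<j}p_ip_j|f_i\wedge f_j|=A(K)^{-1}$, the middle term. For the leftmost equality I must check that the $2$-forms $f_i\wedge f_j$ all carry the same orientation, so that no cancellation occurs when the absolute value is moved outside the sum. This is where I would be most careful: because $f_i\le 1$ on $K$ with equality on the edge, each $n_i$ is an \emph{outward} normal, which forces all the scalars $\lambda_i$ above to have one and the same sign. Combined with the observation (already used in Lemma~\ref{lemma1}) that the ordered pairs $(v_i,v_j)$ with $i<j$ share a common orientation, this makes every $f_i\wedge f_j=\lambda_i\lambda_j\,(v_i\wedge v_j)$ a uniformly signed multiple of a fixed area form, so the absolute value factors out termwise and the outer equality follows.

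The computation itself is routine; the only genuine subtlety---and the step I would treat most carefully---is this orientation bookkeeping, that is, matching the orientation of the boundary traversal $a_1,a_2,\dots,a_{2n}$ with the sign of the gradients $n_i$ so as to confirm that the $\lambda_i$ really are of one sign and that the triangle-inequality bound $|\sum p_ip_jf_i\wedge f_j|\le\sum p_ip_j|f_i\wedge f_j|$ is attained with equality.
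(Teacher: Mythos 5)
Your proof is correct and takes essentially the same approach as the paper: the paper packages your two observations ($n_i\perp v_i$ and $2A(\triangle 0a_ia_{i+1})=|v_i|/|n_i|$) into the single statement that the area-form isometry $J\co(\R^2)^*\to\R^2$ sends $S_if_i$ to $\pm v_i$ (where $S_i=2A(\triangle 0a_ia_{i+1})$), and then, just as you do, sums the per-pair identity via Lemma \ref{lemma1} and pulls the absolute value outside the sum because all pairs $(f_i,f_j)$, $i<j$, have the same orientation. Your explicit outward-normal bookkeeping with the signs $\lambda_i$ is simply a more detailed justification of that last orientation claim, which the paper asserts without elaboration.
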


\begin{proof}
Denote $S_i=2A(\triangle 0a_ia_{i+1})=|a_i\wedge a_{i+1}|$,
then $p_i=S_i/A(K)$.
The oriented area form of $\R^2$ determines a linear isometry
$J\co(\R^2)^*\to\R^2$ in the standard way, and one easily sees
that $J(S_if_i) =\pm v_i$ (depending on the orientation).
Hence 
$$
S_iS_j |f_i\wedge f_j | =  |(S_if_i)\wedge (S_jf_j) | =   |v_i\wedge v_j |
$$
and therefore
$$
 p_ip_j |f_i\wedge f_j| = \frac1{A(K)^2} S_iS_j |f_i\wedge f_j|  =  \frac1{A(K)^2} |v_i\wedge v_j| .
$$

To prove the second assertion, 
observe that all pairs $(f_i,f_j)$, $1\le i<j\le n$, are of the same orientation, hence
$$
 \biggl|\sum_{1\le i<j\le n} p_ip_j\, f_i\wedge f_j \biggr| = \sum_{1\le i<j\le n} p_ip_j |f_i\wedge f_j |
  = \frac1{A(K)^2} \sum_{1\le i<j\le n} |v_i\wedge v_j | = \frac1{A(K)}
$$
where the last identity follows from Lemma \ref{lemma1}.
\end{proof}

It remains to prove the inequality part of Proposition \ref{main-prop}.
The next lemma covers the principal case when the $f_i$'s are
supporting functions of the sides.

\begin{lemma}\label{lemma3}
Let $K=a_1a_2\dots a_{2n}$ be a symmetric $2n$-gon in $\R^2$.
For each $i=1,\dots,n$, let $f_i\co\R^2\to\R$ be linear function
such that $f_i=1$ on $[a_ia_{i+1}|$.
Let $p_1,\dots,p_n$ be nonnegative real numbers such that $\sum p_i=1$.
Then
\begin{equation}
\label{main-ineq}
 \sum_{1\le i<j\le n} p_ip_j\, |f_i\wedge f_j| \le \frac1{A(K)} .
\end{equation}
\end{lemma}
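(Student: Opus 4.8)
The inequality \eqref{main-ineq} is really an extremal problem: the left-hand side $\Phi(p):=\sum_{i<j}p_ip_j\,|f_i\wedge f_j|$ is a quadratic form on the probability simplex $\{p_i\ge 0,\ \sum p_i=1\}$, and the plan is to show that its maximum equals $1/A(K)$. The first step is to rewrite $\Phi(p)$ itself as an area. From the computation in the proof of Lemma \ref{lemma2} we have $S_iS_j\,|f_i\wedge f_j|=|v_i\wedge v_j|$, where $v_i=\overrightarrow{a_ia_{i+1}}$ and $S_i=|a_i\wedge a_{i+1}|=2A(\triangle 0a_ia_{i+1})$. Setting $g_i=(p_i/S_i)\,v_i$ therefore gives $\Phi(p)=\sum_{i<j}|g_i\wedge g_j|$. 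Since each $g_i$ is a nonnegative multiple of $v_i$, the (nonzero) vectors $g_1,\dots,g_n$ are consistently oriented, so Lemma \ref{lemma1} applied to them identifies this sum with the Euclidean area $A(K')$ of the (possibly degenerate) symmetric polygon $K'$ whose successive edge vectors are $g_1,\dots,g_n$. Thus $\Phi(p)=A(K')$, where $K'$ is a symmetric polygon with edges parallel to those of $K$.

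Next I would read off the meaning of the constraint $\sum p_i=1$ in terms of $K'$. Writing $h_i$ for the distance from the origin to the line carrying the side $[a_ia_{i+1}]$, one has $S_i=|v_i|\,h_i$ and $h_i=h_K(n_i)$, the value of the support function of $K$ in the direction of the outer normal $n_i$ to that side. Hence $p_i=(p_i/S_i)\,S_i=h_K(n_i)\,|g_i|$, and summing, $\sum p_i=\sum_{i=1}^n h_K(n_i)\,|g_i|=A(K,K')$, the mixed area of $K$ and $K'$ (indeed $2A(K,K')=\oint_{\partial K'}h_K\,dS_{K'}=2\sum_{i=1}^n h_K(n_i)|g_i|$ by symmetry). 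So the normalization $\sum p_i=1$ says precisely that $A(K,K')=1$, and the extremal problem becomes: among symmetric polygons $K'$ with edges parallel to those of $K$ and with $A(K,K')$ prescribed, maximize $A(K')$.

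This last problem is solved by Minkowski's first inequality in the plane, $A(K,K')^2\ge A(K)\,A(K')$, which yields $A(K')\le A(K,K')^2/A(K)=1/A(K)$, with equality exactly when $K'$ is homothetic to $K$. Unwinding the homothety $g_i=\lambda v_i$ together with the constraint recovers $p_i=S_i/A(K)=2A(\triangle 0a_ia_{i+1})/A(K)$, i.e.\ the equality case already isolated in Lemma \ref{lemma2}. The main obstacle is exactly this last inequality. It is genuinely two-dimensional: it uses that the weights $|v_i\wedge v_j|$ arise from a planar area form, and for an arbitrary nonnegative symmetric matrix in place of $\bigl(|v_i\wedge v_j|\bigr)$ the corresponding bound is simply false — so no naive Cauchy--Schwarz or AM--GM estimate on the $p_ip_j$ can work. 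Since $K$ and $K'$ share their edge normals, I expect that one can avoid quoting mixed-volume theory and instead prove the needed inequality from scratch: expressing both $A(K')$ and $A(K,K')$ as one and the same quadratic form evaluated on the edge data of $K'$ and of $K$, the claim reduces to the fact that this form has a single positive direction (the planar Brunn--Minkowski phenomenon), with the extremal direction being the homothety $K'\parallel K$. That self-contained verification is where the real work lies.
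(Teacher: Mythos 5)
Your proof is correct and is essentially the paper's own argument: both rewrite the sum as the Euclidean area of an auxiliary symmetric polygon $K'$ with edges parallel to those of $K$ (your $g_i$ are, up to the harmless scale factor $A(K)$, the paper's rescaled edges $v_i'=\lambda_i v_i$), interpret the constraint $\sum p_i=1$ as fixing the mixed area of $K$ and $K'$, and conclude via Minkowski's first inequality $A(K,K')^2\ge A(K)\,A(K')$. Your closing worry that one should reprove the Minkowski inequality from scratch is unnecessary -- the paper simply quotes it (as a special case of Alexandrov--Fenchel), exactly as you do.
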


\begin{proof}
Denote $v_i=\overrightarrow{a_ia_{i+1}}$, $q_i = 2A(\triangle 0a_ia_{i+1})/A(K)$
and $\lambda_i=p_i/q_i$.
By the first part of Lemma \ref{lemma3},
$$
q_iq_j |f_i\wedge f_j| = \frac1{A(K)^2} |v_i\wedge v_j| .
$$
Let $v'_i=\lambda_i v_i$ for $i=1,\dots,n$.
Consider a symmetric $2n$-gon $K'=a'_1\dots a'_{2n}$ such that
$\overrightarrow{a'_ia'_{i+1}}=v'_i$. 
Then
\begin{equation*}
\begin{aligned}
\sum_{1\le i<j\le n} p_ip_j\, |f_i\wedge f_j | 
&= \sum_{1\le i<j\le n} \lambda_i\lambda_j q_iq_j\, |f_i\wedge f_j |  \\
&= \frac1{A(K)^2} \sum_{1\le i<j\le n} \lambda_i\lambda_j \, |v_i\wedge v_j | \\
&= \frac1{A(K)^2} \sum_{1\le i<j\le n} |v'_i\wedge v'_j | 
= \frac{A(K')}{A(K)^2}
\end{aligned} 
\end{equation*}
where the last identity follows from Lemma \ref{lemma1}.
Therefore
\begin{equation}
\label{sum-value}
\sum_{1\le i<j\le n} p_ip_j\, |f_i\wedge f_j | = \frac{A(K')}{A(K)^2} .
\end{equation}

Denote $\ell_i=|v_i|$ and $\ell'_i=|v'_i|=\lambda_i|v_i|$.
Let $h_i$  denote the distance from the origin
to the line containing the side $[a_ia_{i+1}]$.
Then $2A(\triangle 0a_ia_{i+1})=h_i\ell_i$, hence
$q_i = h_i\ell_i/A(K)$.
Therefore
$$
 1 = \sum p_i = \sum \lambda_iq_i = \frac1{A(K)}\sum \lambda_i h_i\ell_i
 =  \frac1{A(K)}\sum h_i\ell'_i .
$$
The last sum is the two-dimensional mixed volume $V(K,K')$, 
thus $V(K,K')=A(K)$.
By the Minkowski inequality (which is a special case of the Alexandrov--Fenchel
inequality), we have $V(K,K')^2\ge A(K)A(K')$.
Therefore $A(K')\le A(K)$. Substituting this inequality into \eqref{sum-value}
yields the assertion of the lemma.
\end{proof}

To complete the proof of Proposition \ref{main-prop} it remains
to prove the inequality \eqref{main-ineq} in a slightly more general setting,
namely when $K$ is a symmetric polygon (not necessarily with $2n$ sides)
and $f_1,\dots,f_n$ are arbitrary linear functions such that $f_i|_K\le 1$.
The condition $f_i|_K\le 1$ means that $f_i$ belongs to the
polar polygon $K^*\subset(\R^2)^*$. Consider the left-hand side
of \eqref{main-ineq} as a function in one variable $f_i$ with others staying fixed.
This function is convex (since it is a sum of the absolute values of linear functions),
therefore it attains its maximum on $K^*$ at a vertex of~$K^*$.
The vertices of $K^*$ are supporting linear functions to $K$
at its sides. So it suffices to consider the case when
each $f_i$ equals 1 on one of the sides of $K$.

If two of the functions $f_i$ and $f_j$ coincide (without loss of generality, $f_1=f_n$),
one reduces the problem to a smaller number of functions as follows:
drop $f_n$ from the list of functions
and replace $p_1,p_2,\dots,p_n$ by $p_1+p_n,p_2,\dots,p_{n-1}$.
Also note that changing sign of one of the functions $f_i$ does not change
the left-hand side of \eqref{main-ineq}.
Thus it suffices to consider the case when $\pm f_1,\dots,\pm f_n$ are all distinct.
If $n=1$, the left-hand side of \eqref{main-ineq} is zero so the
inequality is trivial.
If $n>1$, applying Lemma \ref{lemma3} to the polygon $K' = \bigcap_{i=1}^n \{x:|f_i(x)|\le 1\}$
yields that
$$
\sum_{1\le i<j\le n} p_ip_j\, |f_i\wedge f_j| \le \frac1{A(K')} \le \frac1{A(K)}
$$
since $K\subset K'$. This completes the proof of Proposition \ref{main-prop}
and hence the proof of Theorem \ref{main}.

\section{Proof of Theorem \ref{Z2}}
\label{sec-Z2}

Let $V$ be a finite-dimensional normed space and $B$ its unit ball.
Let $M$ be a compact two-dimensional smooth manifold with $\pd M\simeq S^1$
and $S\co M\to V$ a Lipschitz map such that $S|_{\pd M}$ parametrizes
the boundary of a 2-disc $D$ lying in a two-dimensional linear subspace $P\subset V$.
Our goal is to prove that $A^{bh}(S)\ge A^{bh}(D)$.
By means of approximation, we may assume that $B$ is a polyhedron.

The intersection $B\cap P$ is a symmetric polygon
$a_1a_2\dots a_{2n}$.
Define linear functions $F_i\co V\to\R$ and coefficients $p_i$
($i=1,\dots,n$) as in the proof of Theorem~\ref{main}.
Define a function $\alpha\co GC_2(V)\to\R_+$ by
$$
 \alpha (\sigma) = \pi \cdot \!\!\! \sum_{1\le i<j\le n} p_ip_j\, |(F_i\wedge F_j)\cdot\sigma|
$$
for all $\sigma\in GC_2(V)$.
Similarly to the proof of Theorem \ref{main},
Proposition \ref{main-prop} implies that
\begin{equation}
\label{Z2-calibrator}
\alpha(\sigma)\le A^{bh}(\sigma)
\end{equation}
for all $\sigma\in GC_2(V)$, and this inequality turns into equality if $\sigma\in\Lambda^2(P)$.
Indeed, let $\sigma = v_1\wedge v_2$ where $v_1,v_2\in V$ are linearly independent vectors
and consider the linear embedding $I\co\R^2\to V$
that takes the standard basis of $\R^2$ to $v_1$ and $v_2$.
Let $K=I^{-1}(B)$, then
$$
 A^{bh}(\sigma) = \frac \pi{A(K)}
$$
where $A$ is the Euclidean area.
On the other hand
$$
 \alpha(\sigma) = 
 \pi \cdot \sum_{1\le i<j\le n} p_ip_j\, |f_i\wedge f_j|
$$
where $f_i=F_i\circ I$.
Recall that $F_i|_B\le 1$, hence $f_i|_K\le 1$ for all $i$.
By Proposition \ref{main-prop}, we have
$$
 \sum_{1\le i<j\le n} p_ip_j\, |f_i\wedge f_j|
 \le \frac1{A(K)}
$$
and \eqref{Z2-calibrator} follows.
In the case when $v_1,v_2\in P$, the equality case of
Proposition \ref{main-prop} and the construction of $f_i$ and $p_i$
yields the equality in \eqref{Z2-calibrator}.

It remains to show that \eqref{Z2-calibrator} implies the inequality
$A^{bh}(S)\ge A^{bh}(D)$.
For each pair $i,j$, $1\le i<j\le n$, define a linear map
$F_{ij}\co V\to\R^2$ by
$$
 F_{ij}(v) = (F_i(v),F_j(v))\in\R^2, \qquad v\in V,
$$
and consider the map $F_{ij}\circ S\co M\to\R^2$.
The Euclidean area $A(F_{ij}\circ S)$ of this map is given by
$$
 A(F_{ij}\circ S) = \int_S |F_i\wedge F_j| .
$$
(The term $|F_i\wedge F_j|$ here is a two-dimensional density in $V$
given by $\sigma\mapsto|(F_i\wedge F_j)\cdot\sigma|$, $\sigma\in GC_2(V)$.
Recall that the integration of a density over a surface, orientable or not,
is well-defined.)
Therefore
$$
 \int_S \alpha = \int_S \sum_{1\le i<j\le n} p_ip_j|F_i\wedge F_j|
 = \sum_{1\le i<j\le n} p_ip_j A(F_{ij}\circ S) .
$$
Similarly, for the planar disc $D\subset P$ we have
$$
 \int_D \alpha = \int_D \sum_{1\le i<j\le n} p_ip_j|F_i\wedge F_j|
 = \sum_{1\le i<j\le n} p_ip_j A(F_{ij}(D)) .
$$
Observe that $F_{ij}(D)\subset F_{ij}\circ S(M)$ because $S|_{\pd M}$
is a degree 1 map from $\pd M$ to $\pd D$. Therefore
$A(F_{ij}\circ S) \ge A(F_{ij}(D))$ for all $i,j$, and hence the above
identities imply that $\int_S \alpha \ge \int_D \alpha$.
By \eqref{Z2-calibrator}, we have $A^{bh}(S)=\int_S A^{bh} \ge \int_S \alpha$,
and $A^{bh}(D) = \int_D A^{bh} \ge \int_D \alpha$ by the equality case
of \eqref{Z2-calibrator}. Thus
$$
A^{bh}(S)\ge \int_S \alpha\ge \int_D \alpha=A^{bh}(D)
$$
and Theorem \ref{Z2} follows.

\section{Remarks on the higher-dimensional case}
\label{sec-kdim}

Although we cannot generalize Theorem \ref{main}
to surfaces of dimension $k>2$ at the moment, some of the
arguments from Section \ref{sec-main} apply in this case as well.
Moreover, the convexity of a $k$-dimensional Busemann--Hausdorff 
surface area density is equivalent to a $k$-dimensional
analogue of Proposition \ref{main-prop}.
In hope that this approach will be useful,
we formulate this equivalence as the following proposition.

\begin{proposition}
\label{prop-kdim}
For every positive integer $k$,
the following two assertions are equivalent.

(i) In every finite-dimensional normed space $V$,
the $k$-dimensional Busemann--Hausdorff area density
admits a convex extension to $\Lambda^k V$.

(ii) For every $n\ge k$ and every central
symmetric convex polyhedron $K\subset\R^k$
with $n$ pairs of opposite faces $F_1,F_1',\dots,F_n,F_n'$
there exist
a collection $\mu_{i_1i_2\dots i_k}$, $1\le i_1<\dots<i_k\le n$,
of real coefficients such that the following holds.
For every convex polyhedron $K'\in\R^k$ and every collection
of linear functions $f_1,\dots,f_n\co\R^k\to\R$ such that
$f_i|_{K'}\le 1$ for all $i$, one has
\begin{equation}
\label{ineq-kdim}
 \left| \sum \mu_{i_1i_2\dots i_k} f_{i_1}\wedge\dots\wedge f_{i_k}\right| \le \frac1{\vol(K')},
\end{equation}
and this inequality turns into equality if $K'=K$
and $f_i$'s are supporting linear functions
corresponding to faces $F_i$'s (i.e., $f_i|_{F_i}=1$).
\end{proposition}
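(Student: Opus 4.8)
The plan is to run everything through the calibrator criterion recorded after the definition of a calibrator (stated there for $2$-planes, but valid for every $k$ with the same one-line proof): a $k$-dimensional density $A$ extends convexly to $\Lambda^k V$ if and only if every $k$-plane $P\subset V$ admits a calibrating form. I would translate each of $(i)$ and $(ii)$ into the language of calibrators and treat the two implications separately. The implication $(ii)\Rightarrow(i)$ is a verbatim $k$-dimensional transcription of Section \ref{sec-main}, while $(i)\Rightarrow(ii)$ requires a device for extracting the coefficients $\mu_{i_1\dots i_k}$ from an abstractly given calibrator, and it is there that the real difficulty sits.

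For $(ii)\Rightarrow(i)$ I would argue exactly as in the proof of Theorem \ref{main}. By approximation it suffices to treat a polyhedral unit ball $B$. Fix a $k$-plane $P$; then $K:=B\cap P$ is a centrally symmetric polytope with some number $n$ of pairs of opposite faces, and for each face I pick a supporting linear function $F_i\co V\to\R$ with $F_i=1$ on that face of $B$. Feeding $K$ into $(ii)$ produces coefficients $\mu_{i_1\dots i_k}$, and I set $\omega=\ep_k\sum \mu_{i_1\dots i_k}\,F_{i_1}\wedge\dots\wedge F_{i_k}\in\Lambda^k V^*$. To verify that $\omega$ calibrates $P$, take a simple $k$-vector $\sigma=v_1\wedge\dots\wedge v_k$, identify its span with $\R^k$ via the map $I\co\R^k\to V$ sending the standard basis to the $v_i$, and put $f_i=F_i\circ I$ and $K'=I^{-1}(B)$. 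Then $f_i|_{K'}\le 1$, and since $A^{bh}(\sigma)=\ep_k/\vol(K')$, inequality \eqref{ineq-kdim} of $(ii)$ gives precisely $|\omega(\sigma)|\le A^{bh}(\sigma)$; when $v_1,\dots,v_k\in P$ the body $K'$ is a copy of $K$ and the $f_i$ its supporting functions, so the equality clause of $(ii)$ upgrades this to $|\omega(\sigma)|=A^{bh}(\sigma)$. Hence $P$ is calibrated and $(i)$ follows.

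For $(i)\Rightarrow(ii)$ the idea is to realize the given symmetric polytope $K$ as a controlled central plane section. Let $f_1^0,\dots,f_n^0$ be the supporting functions of $K$ normalized by $f_i^0=1$ on $F_i$, so that $K=\{x:|f_i^0(x)|\le 1\ \forall i\}$, and consider the embedding $\Phi\co\R^k\to\R^n$, $\Phi(x)=(f_1^0(x),\dots,f_n^0(x))$, into $V=\R^n$ equipped with the sup-norm, whose unit ball is the cube $Q=[-1,1]^n$. Then $P:=\Phi(\R^k)$ is a $k$-plane with $Q\cap P=\Phi(K)$, and the coordinate forms satisfy $\Phi^*dy_i=f_i^0$. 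By $(i)$ the cube-normed space has a convex density, so $P$ carries a calibrator, which, being a constant-coefficient form, can be written $\omega=\ep_k\sum \mu_{i_1\dots i_k}\,dy_{i_1}\wedge\dots\wedge dy_{i_k}$; these $\mu_{i_1\dots i_k}$ are my candidate coefficients. Given arbitrary linear $f_1,\dots,f_n$ in general position, I apply the calibrator inequality to the plane spanned by $I'(e_1),\dots,I'(e_k)$, where $I'(x)=(f_1(x),\dots,f_n(x))$; using $A^{bh}(\,\cdot\,)=\ep_k/\vol((I')^{-1}(Q))$ together with $(I')^{-1}(Q)=\{x:|f_i(x)|\le 1\ \forall i\}$, this reproduces \eqref{ineq-kdim} with $K'=\{|f_i|\le 1\}$, while the equality on $\Lambda^k P$ yields the equality clause (the degenerate configurations, where $I'$ is not injective, are immediate since both sides vanish).

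The main obstacle is the final upgrade to an \emph{arbitrary} convex $K'$. A calibrator furnished by $(i)$ only controls \emph{central} sections of the symmetric cube, and these are exactly the centrally symmetric bodies $\{x:|f_i(x)|\le 1\}$; but $(ii)$ demands the bound $1/\vol(K')$ for every convex $K'$ with $f_i|_{K'}\le 1$. For fixed $f_i$ the hardest admissible $K'$ is the one-sided polyhedron $K'_{\max}=\{x:f_i(x)\le 1\ \forall i\}$, and by volume monotonicity it suffices to prove \eqref{ineq-kdim} for $K'=K'_{\max}$. The trouble is that $\{|f_i|\le 1\}\subset K'_{\max}$ has strictly smaller volume in general, so the calibrator yields the bound $1/\vol(\{|f_i|\le 1\})$, which \emph{exceeds} the required $1/\vol(K'_{\max})$ and is therefore too weak. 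Closing this gap is where a genuinely new convex-geometric input is needed, since convexity of the density is blind to non-symmetric sections. I would attempt to reduce the one-sided case to the centrally symmetric one by a symmetrization, or by a mixed-volume comparison in the spirit of the Minkowski/Brunn--Minkowski step of Lemma \ref{lemma3}. This reduction, rather than the two essentially routine calibrator computations, is the crux of the proposition.
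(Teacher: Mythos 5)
Your two calibrator computations coincide with the paper's own proof: the implication (ii)$\Rightarrow$(i) is the $k$-dimensional transcription of Section~\ref{sec-main}, and for (i)$\Rightarrow$(ii) the paper uses exactly your device --- embed $(\R^k,\|\cdot\|_K)$ into $\R^n_\infty$ by $x\mapsto(f_1^0(x),\dots,f_n^0(x))$, take a calibrator $\omega$ for the image plane, and declare $\mu_{i_1\dots i_k}$ to be the coefficients of $\ep_k^{-1}\omega$. Where you part ways with the paper is your final paragraph, and there your diagnosis is off: the ``main obstacle'' you describe is not a missing lemma awaiting a new convex-geometric idea, because the one-sided statement you are trying to reach is simply false. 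Take $k=2$, $K=[-1,1]^2$, $f_1=x_1$, $f_2=x_2$: the equality clause of (ii) forces $|\mu_{12}|=1/\vol(K)=1/4$, yet for $K'=[-M,1]^2$ with $M>1$ one still has $f_i|_{K'}\le 1$, the left-hand side of \eqref{ineq-kdim} is still $1/4$, while $1/\vol(K')=(M+1)^{-2}<1/4$. Since assertion (i) for $k=2$ is exactly Theorem~\ref{main}, and hence true, no admissible choice of coefficients can make the one-sided version of (ii) hold for arbitrary convex $K'$; in particular no symmetrization or mixed-volume argument will close your gap.

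The resolution is that (ii) must be read with $K'$ centrally symmetric about the origin (equivalently, with the two-sided constraint $|f_i|\le 1$ on $K'$). This is what the paper's proof tacitly assumes when it writes $f^{-1}(B)=\{x:|f_i(x)|\le 1\}\supset K'$: that inclusion follows from $f_i|_{K'}\le 1$ only when $K'=-K'$. (The same convention is invisible in Proposition~\ref{main-prop}, because there the body $K$ is symmetric, so the one- and two-sided conditions agree.) Under this corrected reading, your proposal is already a complete proof and needs no further input: your calibrator argument establishes \eqref{ineq-kdim} for every $K'$ contained in $\{x:|f_i(x)|\le 1\}$, which covers all symmetric admissible $K'$, together with the equality clause; and your (ii)$\Rightarrow$(i) direction only ever invokes (ii) for $K'=I^{-1}(B)$, which is centrally symmetric, so the weakened assertion (ii) still yields the convex extension and the equivalence survives.
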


\begin{remark}
In the two-dimensional case, we just defined $\mu_{ij}=p_ip_j$
where $p_i$ is the portion of the area of $K$ spanned by
the $i$th pair of its faces. This construction is not unique:
for many polygons $K$, other choice of $\mu_{ij}$
works as well. In higher dimensions, we have no idea
how a suitable collection of coefficients
(depending on a polyhedron)
could be defined, and the most straightforward generalization
of the two-dimensional construction does not work.
\end{remark}

\begin{proof}[Proof of Proposition \ref{prop-kdim}]
The implication (ii)$\Rightarrow$(i) is similar to the
deduction of Theorem \ref{main} from Proposition \ref{main-prop}
in Section \ref{sec-main}. To see that (i) implies (ii),
consider a polyhedron $K$ as in (ii) and equip $\R^k$
with a norm $\|\cdot\|$ whose unit ball is $K$.
Let $f_1^K,\dots,f_n^K\co\R^k\to\R$ be the linear functions
corresponding to the faces of $K$, then
$$
 K = \{ x\in\R^k : |f_i^K(x)|\le 1, \ i=1,\dots,n \} .
$$
Hence the linear map $f^K\co\R^k\to\R^n$ given by
$$
 f^K(x) = (f^K_1(x),\dots,f^K_n(x)), \qquad x\in\R^k,
$$
is an isometric embedding of the normed space $(\R^k,\|\cdot\|)$
into $\R^n_\infty=(\R^n,\|\cdot\|_\infty)$ where
the norm $\|\cdot\|_\infty$ on $\R^n$ is defined by
$$
 \|x\|_\infty = \max_{1\le i\le n} |x_i|.
$$
Assuming (i), the $k$-dimensional Busemann--Hausdorff area density
in $\R^n_\infty$ admits a convex extension, and therefore
there exists a calibrating form $\omega\in\Lambda^k(\R^n)^*$
for the linear subspace $f^K(\R^k)$.
The fact that $\omega$ is a calibrator means that
for every linear map $f\co\R^k\to\R^n$ one has
\begin{equation}
\label{calibrate-kdim}
 |f^*\omega| \le \ep_k \vol(f^{-1}(B))^{-1}
\end{equation}
with the equality for $f=f^K$,
where $\ep_k$ is the volume of the Euclidean unit ball in $\R^k$
and $B=[-1,1]^n$ is the unit ball of $\R^n_\infty$.
Let $\mu_{i_1i_2\dots i_k}$, $1\le i_1<\dots<i_k\le n$,
 be the coefficients of $\ep_k^{-1}\omega$,
that is,
$$
 \omega = \ep_k \sum \mu_{i_1i_2\dots i_k} dx_{i_1}\wedge dx_{i_2}\wedge\dots\wedge dx_{i_k} .
$$
Given a polyhedron $K'\subset\R^k$ and linear functions $f_i$ as in (ii),
consider $f=(f_1,\dots,f_n)\co\R^k\to\R^n$. Then
$$
 f^*\omega = \ep_k \sum \mu_{i_1i_2\dots i_k} f_{i_1}\wedge\dots\wedge f_{i_k}
$$
and $\vol(f^{-1}(B))^{-1}\le \vol(K')^{-1}$ 
since
$$
 f^{-1}(B) = \{ x\in\R^k : |f_i(x)| \le 1, i=1,\dots,n\} \supset K'
$$
Thus \eqref{calibrate-kdim} implies \eqref{ineq-kdim},
and the equality in \eqref{calibrate-kdim} for $f=f^K$
yields the equality in \eqref{ineq-kdim} for $K'=K$ and $f_i=f_i^K$.
\end{proof}

\end{document}